\newtheorem{dfn}{Definition}[section]
\newtheorem{thm}[dfn]{Theorem}
\newtheorem{lem}[dfn]{Lemma}
\newtheorem{corollary}[dfn]{Corollary}
\newtheorem{conjecture}[dfn]{Conjecture}
\def\ex{\mathrm{ex}}
\newcommand{\1}{{\uppercase\expandafter{\romannumeral1}}}
\newcommand{\2}{{\uppercase\expandafter{\romannumeral2}}}
\let\svthefootnote\thefootnote
\newcommand\blankfootnote[1]{%
	\let\thefootnote\relax\footnotetext{#1}%
	\let\thefootnote\svthefootnote%
}
\begin{document}

\title{Upper bounds on the extremal number of the 4-cycle}

\author{
Jie Ma
~~~~~~~
Tianchi Yang
}

\date{}

\blankfootnote{School of Mathematical Sciences, University of Science and Technology of China, Hefei, Anhui 230026, China.
Partially supported by the National Key R and D Program of China 2020YFA0713100,
National Natural Science Foundation of China grant 12125106, and Anhui Initiative in Quantum Information Technologies grant AHY150200.
Emails: jiema@ustc.edu.cn, ytc@mail.ustc.edu.cn.}

\maketitle
\begin{abstract}
We obtain some new upper bounds on the maximum number $f(n)$ of edges in $n$-vertex graphs without containing cycles of length four.
This leads to an asymptotically optimal bound on $f(n)$ for a broad range of integers $n$ as well as
a disproof of a conjecture of Erd\H{o}s from 1970s which asserts that $f(n)=\frac12 n^{3/2}+\frac14 n+o(n)$.
\end{abstract}

\section{Introduction}
Let $\ex(n,C_4)$ denote the maximum number of edges in an $n$-vertex {\it $C_4$-free} graph.\footnote{Throughout this paper, a graph is called {\it $C_4$-free} if it does not contain a cycle of length four as a subgraph.}
The study of this extremal number can be dated back to Erd\H{o}s \cite{E38} eighty years ago and has a rich, lasting influence on the development of extremal graph theory (see \cite{FS}).

It is well-known (see K\H{o}v\'ari-S\'os-Tur\'an \cite{KST} and Reiman \cite{R58}) that for any positive integer $n$,
\begin{equation}\label{equ:Reiman}
\ex(n,C_4)\le \frac{n}{4}(1+\sqrt{4n-3})=\frac12 n^{3/2}+\frac{n}{4}-O(n^{1/2}).
\end{equation}
Notably, the well-known friendship theorem of Erd\H{o}s-R\'enyi-S\'os \cite{ERS} shows that an equation can never hold in \eqref{equ:Reiman}.
On the other hand, using polarity graphs defined by finite projective planes,
Brown \cite{Br66} and Erd\H{o}s-R\'enyi-S\'os \cite{ERS} independently proved the following famous lower bound that
\begin{equation}\label{equ:Brown}
\ex(q^2+q+1,C_4)\geq \frac12q(q+1)^2 \mbox{ for all prime powers } q.
\end{equation}
These two results together with some basic property on the distribution of prime numbers imply the asymptotic formula that $\ex(n,C_4)=(\frac 12+o(1) )n^{3/2}$.

Motivated by above results, Erd\H{o}s raised several conjectures in 1970s to enrich the understanding on the extremal number $\ex(n,C_4)$.
In \cite{E76} Erd\H{o}s conjectured that the lower bound \eqref{equ:Brown} is sharp for prime powers $q$.
This was fully resolved by F\"uredi in \cite{Fur83,Fur96}, where he proved that
\begin{equation}\label{equ:Furedi}
\ex(q^2+q+1,C_4)\leq \frac12q(q+1)^2 \mbox{ for all integers } q\geq 14.
\end{equation}
Results related to \eqref{equ:Furedi} can be found in \cite{FKNW,HMY};
it is also worth noting that the bipartite version of the extremal number of $C_4$ has been well studied in \cite{DHS} (see Section 3.1 of \cite{FS}).
If one substitutes $n=q^2+q+1$ for prime powers $q$ in \eqref{equ:Brown},
then it yields that $\ex(n, C_4)\geq \frac12 n^{3/2}+\frac{n}{4}-O(n^{1/2})$ for such integers $n$.
Note that this meets the upper bound \eqref{equ:Reiman} up to the error term $O(n^{1/2})$.
Erd\H{o}s \cite{E76,E78} made the following tempting conjecture.

\begin{conjecture}[Erd\H{o}s \cite{E76,E78}]\label{conj: Erdos}
It holds that $$\ex(n,C_4)=\frac 12 n^{3/2}+\frac14 n+o(n).$$
\end{conjecture}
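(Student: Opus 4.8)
The plan is to split the claimed equality into its two halves and to notice that the upper direction is essentially free, so that the whole content of the conjecture is a lower bound holding for \emph{every} $n$. Indeed, the K\H{o}v\'ari--S\'os--Tur\'an/Reiman estimate \eqref{equ:Reiman} already reads $\ex(n,C_4)\le \frac12 n^{3/2}+\frac14 n-O(n^{1/2})$, which has exactly the conjectured shape with linear coefficient $\frac14$; hence $\ex(n,C_4)\le \frac12 n^{3/2}+\frac14 n+o(n)$ with no further work. Everything therefore rests on the matching lower bound $\ex(n,C_4)\ge \frac12 n^{3/2}+\frac14 n-o(n)$, and this lower bound for all $n$ is the only thing I would actually have to establish.

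For the lower bound I would begin from Brown's polarity construction \eqref{equ:Brown}, which by the F\"uredi bound \eqref{equ:Furedi} is tight and yields, at the special orders $n_0=q^2+q+1$ with $q$ a prime power, precisely $\frac12 q(q+1)^2=\frac12 n_0^{3/2}+\frac14 n_0-O(n_0^{1/2})$ edges; thus the conjecture already holds verbatim along the sparse sequence $\{q^2+q+1\}$. To reach an arbitrary $n$ I would interpolate, taking the largest prime power $q$ with $n_0=q^2+q+1\le n$, using the tight polarity graph on $n_0$ vertices, and padding with $n-n_0$ isolated vertices (dually, one may delete lowest-degree vertices from the polarity graph of the next larger special order). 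Monotonicity of $\ex(\cdot,C_4)$, which follows by adding isolated vertices, then transfers the count at $n_0$ up to $n$, so this step is routine.

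The hard part, and the point at which the program stalls, is controlling the interpolation loss. Writing $n=n_0+\Delta$, a short expansion (using $\tfrac{d}{dn}\,\tfrac12 n^{3/2}=\tfrac34 n^{1/2}$) shows the padded graph falls short of the target $\frac12 n^{3/2}+\frac14 n$ by $\Theta(n^{1/2}\Delta)$, since raising the order from $n_0$ to $n$ increases the main term by about $\tfrac34 n^{1/2}\Delta$ while the edge count stays fixed. Now $\Delta$ can be as large as the spacing of consecutive special orders, namely $(q'-q)(q'+q+1)$ for consecutive prime powers $q<q'$; even when $q,q'$ are as close as prime powers allow this spacing exceeds $2q=\Theta(n^{1/2})$, so for $n$ lying deep in a gap one has $\Delta=\Theta(n^{1/2})$ and the shortfall is already $\Theta(n)$ --- of the very same order as the coefficient $\frac14 n$ we are trying to pin down, and for typical $n$ the average size of prime gaps pushes the spacing to $\sim n^{1/2}\log n$ and the shortfall to $\sim n\log n$. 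Hence no number-theoretic sharpening of prime-gap bounds can rescue the interpolation: it cannot certify the coefficient $\frac14$ at general orders. The decisive step would instead have to be an entirely new family of $C_4$-free graphs, dense at \emph{every} order $n$, whose size stays within $o(n)$ of $\frac12 n^{3/2}+\frac14 n$; producing such constructions (whose existence is far from clear) is what I expect to be the real obstacle.
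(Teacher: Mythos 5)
You cannot complete this proposal, because the statement you were asked to prove is false: disproving this conjecture is the main point of the paper (Theorem~\ref{thm:main}). Your reduction is sound as far as it goes---the upper half is indeed immediate from \eqref{equ:Reiman}, so the whole content is the lower bound $\ex(n,C_4)\ge \frac12 n^{3/2}+\frac14 n-o(n)$ for every $n$---and your diagnosis of where interpolation from polarity graphs dies is accurate: padding the extremal graph on $n_0=q^2+q+1$ vertices loses $\Theta(n^{1/2}\Delta)$, which is $\Theta(n)$ once $n$ sits $\Theta(n^{1/2})$ deep in a gap between special orders, i.e.\ exactly the order of the coefficient being contested. But where you conclude that the remaining obstacle is the absence of a known dense construction at every order (``whose existence is far from clear''), the paper proves that no such construction can exist for a positive density of integers $n$: Theorem~\ref{thm: q^2+q+1-r} gives $\ex(q^2+q+1-r,C_4)\le \frac12 q(q+1)^2-0.92rq$ for sufficiently large $r\le 0.01q$, which by the expansion \eqref{equ:form} equals $\left(\frac12 n^{3/2}+\frac14 n\right)-0.17rq+O(q)$ and so lies strictly below the conjectured value when $r=\Theta(q)$; Theorem~\ref{thm: q^2+q+1+r} does the same on the other side of $q^2+q+1$. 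Note also that your gap analysis by itself settles nothing in either direction: the failure of one family of constructions is not evidence about $\ex(n,C_4)$, which is why the paper's contribution is an upper bound on all $C_4$-free graphs, not an analysis of polarity graphs.

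Those upper bounds are obtained by a counting argument intrinsic to arbitrary $C_4$-free graphs, with no reference to constructions: one defines the deficiency $f(v)=q+1-d(v)$, uses disjointness of neighborhoods (Lemma~\ref{equ: f(N(u))}) to show every vertex of degree $q+1$ forces total deficiency at least $r_0$ in its neighborhood, bounds the number of vertices of degree exceeding $q+1$ by $O(r^2)$ (Lemma~\ref{lem:max-degree and f}, via the $2$-path count of Lemma~\ref{lem:counting 2-path_2} and a minimum-degree reduction, Lemma~\ref{lem:r0}), and then derives a contradiction by double-counting a deficiency-weighted sum over edges between $S_{q+1}$ and $S$. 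So the correct moral of your own shortfall computation is the opposite of the one you drew: the $\Theta(n)$ loss you identified for interpolated polarity graphs is, for a positive density of $n$ and up to the constant $\epsilon$, a genuine feature of $\ex(n,C_4)$ itself, and the conjectured lower bound $\frac12 n^{3/2}+\frac14 n-o(n)$ is simply not true for those $n$.
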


\noindent He also commented in \cite{E78} that ``it is not impossible that the error term is $O(n^{1/2})$.''

The present paper aims to establish new upper bounds on $\ex(n,C_4)$ (see Theorems~\ref{thm: q^2+q+1-r} and \ref{thm: q^2+q+1+r} below).
Contrary to the supportive evidences, these bounds imply the following result,
which shows that the above conjecture of Erd\H{o}s does not hold in a strong sense.

\begin{thm}\label{thm:main}
There exist some real $\epsilon>0$ and a positive density of integers $n$ such that $$\ex(n,C_4)\leq \frac12 n^{3/2}+\left(\frac14-\epsilon\right)n.$$
\end{thm}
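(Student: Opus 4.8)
The plan is to derive Theorem~\ref{thm:main} as a corollary of the two quantitative bounds promised in the introduction, Theorems~\ref{thm: q^2+q+1-r} and \ref{thm: q^2+q+1+r}, which sharpen the Kővári--Sós--Turán bound \eqref{equ:Reiman} in a neighborhood of the ``perfect'' values $n=q^2+q+1$. First I would recall the arithmetic structure behind \eqref{equ:Reiman}: the quantity $\frac{n}{4}(1+\sqrt{4n-3})$ equals $\frac12 n^{3/2}+\frac14 n$ precisely up to an $O(n^{1/2})$ error, and the extremal (polarity-graph) constructions saturate this only when $n$ lands exactly on $q^2+q+1$. The key idea is that for integers $n$ lying a little \emph{off} these special values the counting argument underlying \eqref{equ:Reiman} leaves slack: if $n=q^2+q+1+r$ or $n=q^2+q+1-r$ for a controlled range of $r$, then a refined double-count of paths of length two (codegrees) forces the loss of a term proportional to $n$, not merely $O(n^{1/2})$. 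I would therefore state the target inequality $\ex(n,C_4)\le \frac12 n^{3/2}+(\frac14-\epsilon)n$ as the regime where one of these two theorems supplies a genuine linear saving.

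The main structural step is the standard convexity/Cauchy--Schwarz argument made quantitative. In a $C_4$-free graph $G$ on $n$ vertices with $m$ edges, every pair of vertices has at most one common neighbor, so $\sum_{v}\binom{d(v)}{2}\le \binom{n}{2}$. Writing $\bar d=2m/n$ and applying convexity of $\binom{x}{2}$ gives $n\binom{\bar d}{2}\le \binom{n}{2}$, which is exactly \eqref{equ:Reiman}. To gain the extra $\epsilon n$, I would track the \emph{deficiency} in this chain of inequalities: equality in convexity requires all degrees nearly equal, and equality in the codegree bound requires nearly every pair to have exactly one common neighbor. For $n$ not of the form $q^2+q+1$, a parity/divisibility or projective-plane incidence obstruction prevents a near-regular graph from simultaneously achieving near-saturated codegrees, and I would quantify exactly how many pairs must have codegree zero. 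Summing these ``missing'' codegree contributions yields a strengthened inequality $\sum_v \binom{d(v)}{2}\le \binom{n}{2}-\delta n$ for some $\delta>0$ depending on the distance from $q^2+q+1$, and propagating $\delta n$ back through the convexity estimate converts it into the desired linear saving $\epsilon n$ in the edge bound.

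To finish, I would verify that the relevant values of $n$ form a set of \emph{positive density}. This is where the distribution of primes (already invoked in the excerpt to obtain the asymptotic formula) reappears, but now used in reverse: since consecutive prime powers $q$ leave gaps, the perfect values $q^2+q+1$ are sparse, and a positive fraction of all integers $n$ lie at distance $\Theta(\sqrt{n})$ from the nearest such value. For every $n$ in this fraction the off-set $r$ is large enough that the deficiency argument of the previous paragraph applies with a uniform $\epsilon>0$, and I would choose $\epsilon$ as the infimum of the per-$n$ savings over this positive-density set, checking it is bounded away from $0$. Assembling these pieces gives the theorem.

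I expect the genuine obstacle to be the middle step: proving that the codegree deficiency is truly \emph{linear} in $n$ rather than $O(n^{1/2})$. The naive slack in Cauchy--Schwarz is only of order $\sqrt{n}$ — which is exactly why Erdős believed the error term might be $O(n^{1/2})$ — so a successful argument must extract a second-order saving, presumably by combining the global codegree count with a finer local analysis of the degree sequence (e.g. controlling the number of vertices whose degree deviates from $\bar d$, and showing such deviations are themselves forced and costly). Turning ``cannot be a near-perfect polarity graph'' into a quantitatively linear penalty, uniformly over a positive-density set of $n$, is the crux and the reason the refined Theorems~\ref{thm: q^2+q+1-r} and \ref{thm: q^2+q+1+r} are needed rather than \eqref{equ:Reiman} alone.
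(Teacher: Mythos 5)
Your top-level structure is right: the paper's proof of Theorem~\ref{thm:main} is exactly a deduction from Theorems~\ref{thm: q^2+q+1-r} and \ref{thm: q^2+q+1+r}, so the long middle paragraph sketching how one might prove those theorems (via a quantified codegree deficiency in the K\H{o}v\'ari--S\'os--Tur\'an count) is not needed here, and in any case does not resemble the actual arguments of Sections~4--5, which go through a minimum-degree reduction, the deficiency function $f(v)=q+1-d(v)$, and a bound on the number of vertices of degree exceeding $q+1$. The genuine gaps are in the two steps you do need. First, the density argument: you attribute the positive density to the distribution of prime powers, but Theorems~\ref{thm: q^2+q+1-r} and \ref{thm: q^2+q+1+r} hold for \emph{all} sufficiently large integers $q$, not just prime powers, so no number theory enters. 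The paper simply takes $N_1=\{q^2+q+1-r: 6\epsilon\le r/q\le 0.01\}$ and $N_2=\{q^2+q+1+r: 5\epsilon\le r/q\le 0.3\}$; each interval $I_q$ of length $2q+1$ contributes about $(0.01-6\epsilon)q$ integers to $N_1$, giving density $(0.01-6\epsilon)/2>0$ outright. Your phrasing that a positive fraction of integers $n$ lie at distance $\Theta(\sqrt n)$ from the nearest value $q^2+q+1$ is true, but for the trivial reason that consecutive such values are spaced $2q+2$ apart, and it is not by itself sufficient: you must also ensure $r$ stays \emph{below} the thresholds $0.01q$ and $0.3q$ where the theorems apply, which is exactly why $N_1$ and $N_2$ are defined by two-sided constraints on $r/q$.

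Second, you never verify that the theorems actually deliver a saving of $\epsilon n$ over $\frac12 n^{3/2}+\frac14 n$. The needed computation is the expansion $\frac12 n^{3/2}+\frac14 n=\frac12 q(q+1)^2+\frac34 rq+O(q)$ for $n=q^2+q+1+r$. Against this, Theorem~\ref{thm: q^2+q+1-r} gives a bound decreasing at rate $0.92q$ per unit of $r$ versus the curve's $\frac34 q$, a net saving of $0.17rq+O(q)\ge \epsilon n$ once $r\ge 6\epsilon q$; Theorem~\ref{thm: q^2+q+1+r} in the range $r\le 0.3q$ gives a bound increasing at rate roughly $\frac12 q$ versus $\frac34 q$, a saving of $\frac14 rq+O(q)\ge\epsilon n$ once $r\ge 5\epsilon q$. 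Without this comparison the statement that one of the two theorems ``supplies a genuine linear saving'' is an assertion, not a proof; with it, the proof is a few lines and requires none of the codegree machinery you describe.
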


This will follow by our upper bounds on $\ex(n, C_4)$.
Our computation shows that $\epsilon$ can be taken as any positive real less than $0.075$.
This number is unlikely to be tight,
so we did not try to optimize our calculation as well as the constants appearing in the forthcoming results;
see the concluding remarks for more discussion on the real $\epsilon$.
To proceed, we introduce some notation.
For integers $q\geq 0$, let $I_q$ denote the set of $2q+1$ consecutive integers $\{q^2+1,..., (q+1)^2\}$;
let $I_q^-=\{q^2+1,...,q^2+q\}$ and $I_q^+= \{q^2+q+2,..., (q+1)^2\}$ so that $I_q=I_q^-\cup \{q^2+q+1\}\cup I_q^+$.
Note that these $I_q$'s form a partition of the set of positive integers.

Our first result on the upper bound of $\ex(n, C_4)$ focuses on integers $n$ from the sets $I_q^-$.

\begin{thm}\label{thm: q^2+q+1-r}
Let $n=q^2+q+1-r$ be an integer in $I_q^-$ such that $r\leq 0.01q$ is sufficiently large.
Then $$\ex(n,C_4)=\ex(q^2+q+1-r,C_4)\le \frac 12q(q+1)^2-0.92rq.$$
\end{thm}

We point out that this bound can be further improved to the form \eqref{equ:tight-eps} (see the remark after the proof of Theorem~\ref{thm: q^2+q+1-r} in Section~3).
As a corollary, this yields the following asymptotic bound.

\begin{corollary}\label{coro:asy-bound}
Let $q$ be a prime power and $r=o(q)$ be sufficiently large. Then $$\ex(q^2+q+1-r,C_4)=  \frac 12q(q+1)^2-(r+o(1))q.$$
\end{corollary}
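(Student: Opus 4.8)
The plan is to prove the asymptotic equality by a two-sided estimate: the upper bound will be inherited from the sharpened form of Theorem~\ref{thm: q^2+q+1-r}, and a matching lower bound will come from an explicit construction. Since $r=o(q)$, for every sufficiently large prime power $q$ we have $r\le 0.01q$, while $r$ is sufficiently large by hypothesis; hence Theorem~\ref{thm: q^2+q+1-r}, together with its strengthening \eqref{equ:tight-eps} recorded in the remark following its proof, is applicable. The goal is to show that both the upper and the lower bound agree with $\frac12 q(q+1)^2-(r+o(1))q$.

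For the lower bound I would exploit that $q$ is a prime power to bring in the polarity graph $ER_q$ of the projective plane $PG(2,q)$, which underlies the construction in \eqref{equ:Brown}. This graph is $C_4$-free, has $q^2+q+1$ vertices and exactly $\frac12 q(q+1)^2$ edges, and is nearly regular: its $q+1$ absolute points have degree $q$, while the remaining $q^2$ points have degree $q+1$, consistent with $\frac12\big((q+1)q+q^2(q+1)\big)=\frac12 q(q+1)^2$. Since $r=o(q)$, we have $r<q+1$ for large $q$, so I can pick a set $S$ of $r$ absolute points and delete them. Deleting $S$ destroys $\sum_{v\in S}\deg_{ER_q}(v)-e(S)=rq-e(S)\le rq$ edges, and the resulting graph is still $C_4$-free (being a subgraph of a $C_4$-free graph) on $q^2+q+1-r$ vertices. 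This already yields $\ex(q^2+q+1-r,C_4)\ge \frac12 q(q+1)^2-rq$, matching the target lower bound.

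For the upper bound I would invoke the refined estimate \eqref{equ:tight-eps}, which promotes the constant $0.92$ of Theorem~\ref{thm: q^2+q+1-r} to the sharp value $1$ up to lower-order terms, thereby giving $\ex(q^2+q+1-r,C_4)\le \frac12 q(q+1)^2-(r+o(1))q$ once $r$ is large. Combined with the lower bound of the previous paragraph, this sandwiches $\ex(q^2+q+1-r,C_4)$ between quantities that agree with $\frac12 q(q+1)^2-(r+o(1))q$, giving the claimed identity. The crux of the argument, and its only genuine difficulty, is concentrated entirely in the upper bound: the constant $0.92$ in Theorem~\ref{thm: q^2+q+1-r} as literally stated is strictly weaker than the constant $1$ produced by the deletion construction, so the asymptotic equality hinges on upgrading that constant to $1-o(1)$ through \eqref{equ:tight-eps}. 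By contrast, the lower-bound deletion argument is routine once the degree sequence of the polarity graph and the near-regularity bound on $e(S)$ are in hand.
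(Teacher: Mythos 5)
Your proposal is correct and matches the paper's own argument essentially step for step: the paper likewise obtains the lower bound by deleting $r$ of the $q+1$ degree-$q$ vertices from the known extremal (polarity) graph on $q^2+q+1$ vertices, and couples this with the sharpened upper bound \eqref{equ:tight-eps} in the regime $r/q=O(\epsilon)$, $r=\Omega(1/\epsilon)$. The only cosmetic difference is that you invoke the polarity graph of $PG(2,q)$ directly, whereas the paper cites the extremal-graph structure results of Brown, Erd\H{o}s--R\'enyi--S\'os and F\"uredi for the same object.
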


Our second result on the upper bound of $\ex(n, C_4)$ considers integers $n$ belonging to the sets $I_q^+$.

\begin{thm}\label{thm: q^2+q+1+r}
Let $n=q^2+q+1+r$ be a sufficiently large integer in $I_q^+$ such that $r\leq 0.6q$. Then
$$\ex(n,C_4)=\ex(q^2+q+1+r,C_4)\le \frac 12\big(q^2+q+1+\max\{r,2r-0.3q\}\big)(q+1).$$
\end{thm}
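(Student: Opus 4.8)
The plan is to bound $\ex(n,C_4)$ by a vertex-deletion argument anchored at the centre $m:=q^2+q+1$, where F\"uredi's bound \eqref{equ:Furedi} is available. Let $G$ be a $C_4$-free graph on $n=m+r$ vertices with $\ex(n,C_4)$ edges, and let its degrees be $d_{(1)}\le\cdots\le d_{(n)}$. Delete the $r$ vertices of smallest degree one at a time; the resulting graph $G'$ is $C_4$-free on $m$ vertices, so $e(G')\le \tfrac12 q(q+1)^2=\tfrac{q+1}{2}(m-1)$ by \eqref{equ:Furedi}. Since each deletion only lowers the remaining degrees, the number of edges removed is at most $\Sigma_r:=\sum_{j=1}^{r}d_{(j)}$, whence $e(G)\le \tfrac{q+1}{2}(m-1)+\Sigma_r$. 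It therefore suffices to prove that $\Sigma_r\le \tfrac{q+1}{2}\big(\max\{r,\,2r-0.3q\}+1\big)$.

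The only elementary tool constraining the degree sequence is the common-neighbour (cherry) inequality $\sum_v\binom{d_v}{2}\le\binom{n}{2}$, expressing that $C_4$-freeness forces each pair of vertices to have at most one common neighbour; writing $d_v=(q+1)+x_v$ and using $n-1=q(q+1)+r$ it becomes $(2q+1)\sum_v x_v+\sum_v x_v^2\le nr$. This bounds the average degree by essentially $q+1$, and since the $r$ smallest degrees have mean at most the overall mean it gives only $\Sigma_r\le \tfrac{2er}{n}\approx r(q+1)$. Fed back into $e(G)\le\tfrac{q+1}{2}(m-1)+\Sigma_r$ this merely reproduces the weaker value $2r$ (equivalently the crude estimate $\ex(n,C_4)\le\ex(m,C_4)\cdot\tfrac{n(n-1)}{m(m-1)}$). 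The whole gain must come from showing that the smallest degrees lie strictly below the average, of order $\tfrac12(q+1)$; equivalently, that the deficit $D:=\binom n2-\sum_v\binom{d_v}2$ is substantially larger than cherry-counting alone guarantees.

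The crux is thus a structural refinement of the cherry bound for $C_4$-free graphs, which I would take from the preliminary results of Section~2 (the same engine driving Theorem~\ref{thm: q^2+q+1-r}): it quantifies how a vertex of small degree is forced to share no common neighbour with many others, charging a definite amount to $D$ and thereby forbidding a nearly $(q+1)$-regular degree sequence with surplus. I would then run the estimate in two regimes separated by a threshold $r\asymp 0.3q$. For $r\le 0.3q$ the refinement applies to all $r$ deleted vertices and yields the full factor-two saving, $\Sigma_r\le\tfrac{q+1}{2}(r+1)$, giving the branch $\max\{\cdot\}=r$. For $0.3q<r\le 0.6q$ only about $0.3q$ vertices can be charged this cheaply before the deficit budget is exhausted, so the saving saturates near $0.3q\cdot\tfrac{q+1}{2}$ and the bound degrades to the branch $\max\{\cdot\}=2r-0.3q$; the constant $0.3$ and the ceiling $r\le 0.6q$ emerge from optimising the crossover between these two estimates.

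The main obstacle is exactly this structural lemma: converting the qualitative F\"uredi/friendship obstruction into a uniform, quantitative lower bound on the contribution of low-degree vertices to $D$, valid not only for $G$ but stably along the entire deletion process, so that the intermediate graphs stay in the range where the lemma and \eqref{equ:Furedi} apply (this is where the hypothesis that $n$ is sufficiently large enters). Granting it, combining $e(G)\le\tfrac{q+1}{2}(m-1)+\Sigma_r$ with the two-regime bound on $\Sigma_r$ gives $\ex(n,C_4)\le\tfrac12\big(q^2+q+1+\max\{r,\,2r-0.3q\}\big)(q+1)$, as claimed.
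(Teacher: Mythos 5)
Your reduction to the inequality $\Sigma_r\le\tfrac{q+1}{2}\bigl(\max\{r,2r-0.3q\}+1\bigr)$ is where the proof stops being a proof: the ``structural refinement of the cherry bound'' that is supposed to deliver it is never stated or proved, and nothing of that form exists in the paper's preliminaries (Lemma~\ref{equ: f(N(u))} bounds the deficiency of a neighbourhood from below, and Lemma~\ref{lem:counting 2-path_2} is a two-path count anchored at a single vertex; neither says anything about the sum of the $r$ smallest degrees). Worse, the intermediate inequality you would need in the main regime, namely that the $r$ smallest degrees average about $\tfrac12(q+1)$, is almost certainly not true as a standalone statement: nothing you establish excludes a $C_4$-free graph on $n=q^2+q+1+r$ vertices that is (close to) $(q+1)$-regular, and for such a graph $\Sigma_r\approx r(q+1)$, twice your claimed bound, even though the theorem itself would still hold (the graph has exactly $\tfrac12 n(q+1)$ edges). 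The loss is built into your decomposition: the bounds ``$e(G')\le\tfrac12 q(q+1)^2$'' and ``edges removed $\le\Sigma_r$'' cannot be simultaneously tight, and by estimating them separately you discard exactly the correlation that the proof needs. So this is not a fixable bookkeeping issue; the plan as stated requires a lemma that is false in the generality you invoke it.

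For comparison, the paper's argument for $r\le 0.3q$ never passes through F\"uredi's bound or a deletion to $q^2+q+1$ vertices. It assumes $e(G)>\tfrac12 n(q+1)$, uses Lemma~\ref{lem:counting 2-path_2} to show every vertex of degree at least $0.7q$ has fewer than $0.55q$ neighbours in $S^+$, and then double-counts a weight $w(uv)=f(v)$ on edges between $S^+$ and $S$: Lemma~\ref{equ: f(N(u))} gives $W\ge(q-r)(-f(S^+))$ from the $S^+$ side while the neighbourhood claim gives $W\le(0.7q-1)f(S)$ from the $S$ side, forcing $-f(S^+)\le f(S)$; combined with $f(S)+f(S^+)=n(q+1)-2e(G)\le 0$ this yields $S=S^+=\emptyset$, i.e.\ $G$ is $(q+1)$-regular, contradicting the strict inequality. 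The branch $2r-0.3q$ then follows by the trivial observation that the minimum degree of an extremal graph on $n\in I_q^+$ vertices is at most $q+1$, so one deletes vertices one at a time, losing at most $q+1$ edges each, until $r=0.3q$ --- not by any saturation of a ``deficit budget.'' If you want to salvage your outline, you would have to replace the degree-sum bound by an argument that sees the whole degree sequence at once, which is in effect what the deficiency bookkeeping does.
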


\begin{figure}[h]
\centering
 \vspace{-0.5cm}
 \hspace{-1cm}
\begin{minipage} {.45 \textwidth}
\begin{tikzpicture}[scale=0.8]
\tikzstyle{every node}=[font=\small,scale=0.7]
\begin{axis}
[axis lines = left,
hide axis,
xmin=0,
ymin=0,
line width=0.6pt,
]
\addplot[domain=300:2000,
samples= 100,
color=black,]{(1+sqrt(4*x-3))*x/4};
\end{axis}
\node at (0,-0.3) {0};
\node at (2.7,6) {\large $F(n)=n(1+\sqrt{4n-3})/4$};
\node at (3.31,5.35) {\large $\approx n^{3/2}/2+n/4+o(n)$};
 \draw [-latex] (0,0)--(8 ,0) node[right  ] { $n$};
\draw [-latex] (0,0)--(0,6) ;


\draw [fill=black] (5 ,3.54 ) circle (1 pt) node[above=2mm] {\large $B$};
\draw [fill=black] (4.5 ,3.03 ) circle (1 pt) node[left=2mm] {\large$A$};
    \draw [dashed] (5 ,3.54 )--(5,0);
\draw [dashed] (4.5 ,3.03 )--(4.5,0);
\draw (3.9,0)--++(90:0)   node[below=0.3mm]{\small $ q^2-q+1$};
\draw (5.5,0)--++(90:0 )   node[below=0.3mm]{\small $ q^2+q+1$};

\draw [color=red,dashed ] (4.3 ,2.8 )  rectangle (5.2 ,3.7 ) ;
\draw[-latex,color=red,dashed ] (5.2,3.2) to [in=-150,out=-30] (9.9,3.05);
 \end{tikzpicture}
\end{minipage}
  \vspace{-0.3cm}
\hspace{-1cm}
\begin{minipage}[c]{.45\textwidth}
\begin{tikzpicture}[scale=0.7]
\tikzstyle{every node}=[font=\small,scale=0.7]
\clip(-2.5,-2) rectangle (11,8.5);

\draw [-latex] (5.5,0)--(9,0) node[right  ] { $n$};
\draw (0,0)--(4.5,0);
\filldraw (5 ,0) circle (1pt);\filldraw (4.8,0) circle (1pt);\filldraw (5.2,0) circle (1pt);
\draw [-latex] (0,0)--(0,6.9)  ;



\draw[color=black] (3.5, 4) node { line 1 };
\draw (0,0.5)--(8,6.5);
\draw [fill=black] (0,0.5) circle (1.5 pt)  node[left ]{\large $ A$};
\draw [fill=black] (8,6.5) circle (1.5 pt) node[above]{\large $B$};
\node at (0,-0.3) {$q^2-q+1$};

\draw[color=black] (7.8, 5.5) node {line 2};
\draw [color=red](8,6.5)--(7 ,5.35 );
\draw [fill=black] (7 ,5.35 ) circle (1 pt);
\draw [dashed] (7,5.35 )--(7,0 );
\draw [dashed] (8,6.5)--(8,0 );
\draw (7,0)--++(90:0.1)  node[below=1mm]{\small $ q^2+0.99q$};
\draw[color=black] (1.1 , 0.6) node {line 3};
\draw [color=yellow](0,0.5)--(2 ,1.2  );

\draw [dashed] (2 ,1.2  )--(2,0);
\draw (2, 0)--++(90:0.1)  node[below=1mm]{\small $ q^2-0.7q $};
\draw [color=black] (3.1 , 1.7) node {line 4};
\draw [color=blue](2,1.2)--(3.6,3.1 );
\draw [fill=black] (3.6,3.1 ) circle (1 pt);
\draw [dashed] (3.6,3.1 )--(3.6,0);
\node at  (3.8,-0.35){\small $ q^2-0.4q $};
\draw [fill=black] (2,1.2) circle (1 pt);
\draw (0,-1)--(0,-1.5);
\draw (5,-1)--(5,-1.5);
\draw (8,-1)--(8,-1.5);
\draw[-latex] (2 ,-1.25)--(0,-1.25);
\draw[-latex] (3 ,-1.25)--(5 ,-1.25);
\node at (2.5 ,-1.25){$I_{q-1}^+$};
\draw[-latex] (6 ,-1.25)--(5 ,-1.25);
\draw[-latex] (7 ,-1.25)--(8,-1.25);
\node at (6.5,-1.25){$I_{q }^-$};
\end{tikzpicture}
\end{minipage}
\caption{The old and new upper bounds on $\ex(n,C_4)$.}
\end{figure}
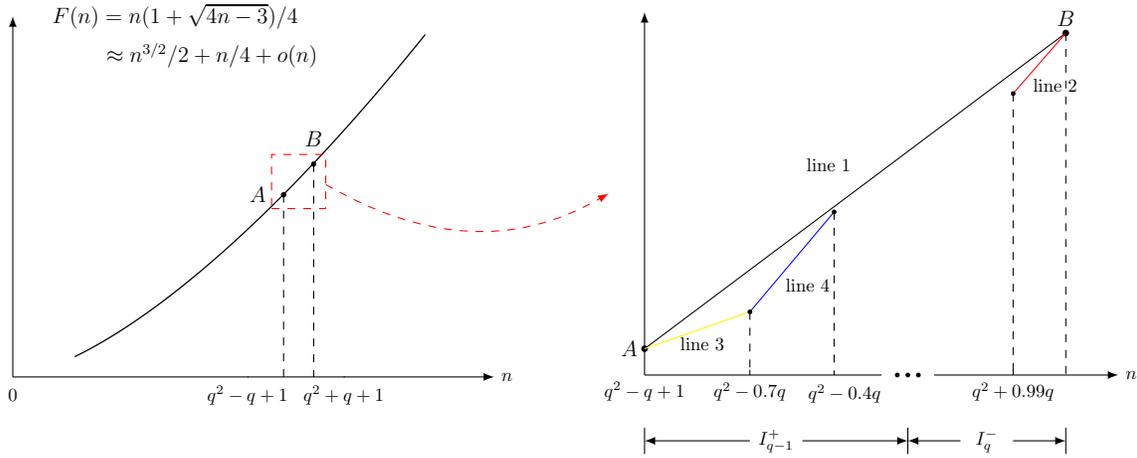

See Figure~1 for an illustration of the improvements on $\ex(n,C_4)$ in Theorems~\ref{thm: q^2+q+1-r} and \ref{thm: q^2+q+1+r}.\footnote{The curve $F(n)=\frac{n}{4}(1+\sqrt{4n-3})$ in the interval $[q^2-q+1,q^2+q+1]$ is quite close to a straight line with slope $0.75q$ (i.e., the line 1).
The upper bound of Theorem~\ref{thm: q^2+q+1-r} is indicated by the line 2 with some slope close to $q$,
while the upper bounds of Theorem~\ref{thm: q^2+q+1+r} are indicated by the line 3 and line 4 with slopes $0.5q$ and $q$, respectively.}

The rest of the paper is structured as follows.
In Section~2, we use Theorems \ref{thm: q^2+q+1-r} and \ref{thm: q^2+q+1+r} to prove Theorem \ref{thm:main}.
In Section~3, we introduce the key terminology, establish two useful lemmas and give a sketch for the proofs of the upper bounds on $\ex(n,C_4)$.
In Section~4, we show Theorem \ref{thm: q^2+q+1-r} and Corollary \ref{coro:asy-bound}.
In Section~5, we finish the proof of Theorem \ref{thm: q^2+q+1+r}.
Finally, in Section~6, we make some concluding remarks.

\section{Proof of Theorem \ref{thm:main}}
Assuming Theorems \ref{thm: q^2+q+1-r} and \ref{thm: q^2+q+1+r}, we now present a short proof of Theorem \ref{thm:main}.
(In fact, either Theorem \ref{thm: q^2+q+1-r} or Theorem \ref{thm: q^2+q+1+r} suffices to derive the statement of Theorem \ref{thm:main}.)

\begin{proof}[\bf Proof of Theorem \ref{thm:main}]
Let $\epsilon>0$ be some small absolute constant (i.e., $\epsilon=0.001$ suffices).
Let $n=q^2+q+1+r\in I_q$. So we have $-q \leq r\leq q$.
By some straightforward calculations, it holds that
\begin{equation}\label{equ:form}
\frac12 n^{3/2}+\frac14n=\frac 12q(q+1)^2+\frac34 rq+O(q).
\end{equation}

Let $N_1=\{q^2+q+1-r: 6\epsilon\leq r/q\leq 0.01\}$.
Consider any sufficiently large integer $n=q^2+q+1-r\in N_1$.
So $q$ and (thus) $r$ are sufficiently large as well.
We claim that every such $n\in N_1$ satisfies that $\ex(n,C_4)\leq \frac12 n^{3/2}+\left(\frac14-\epsilon\right)n.$
To see this, using Theorem \ref{thm: q^2+q+1-r}, we can derive that
$$\ex(n,C_4)\leq  \frac 12q(q+1)^2-0.92rq=\left(\frac12 n^{3/2}+\frac14n\right)-0.17rq+O(q)\leq \frac12 n^{3/2}+\left(\frac14-\epsilon\right)n,$$
where the equation holds by \eqref{equ:form} and the last inequality holds since $r\geq 6\epsilon q$ and $q$ is sufficiently large.
We note that the set $N_1$ has a positive density $\lim_{n\to \infty} \frac{|N_1\cap [n]|}{n}=(0.01-6\epsilon)/2>0$.

Consider another set of integers $N_2=\{q^2+q+1+r: 5\epsilon \leq r/q\leq 0.3\}$, which has a positive density $(0.3-5\epsilon)/2>0$.
We claim that any sufficiently large $n\in N_2$ also has $\ex(n,C_4)\leq \frac12 n^{3/2}+\left(\frac14-\epsilon\right)n.$
Indeed, by Theorem \ref{thm: q^2+q+1+r} and the above inequality \eqref{equ:form}, we have
$$\ex(n,C_4)\leq \frac 12(q^2+q+1+r)(q+1)=\left(\frac12 n^{3/2}+\frac14n\right)-\frac14rq+O(q)\leq \frac12 n^{3/2}+\left(\frac14-\epsilon\right)n,$$
where the last inequality holds because $r\geq 5\epsilon q$.
This completes the proof of Theorem \ref{thm:main}.
\end{proof}

\section{Preliminaries}
First we introduce the key notation on $C_4$-free graphs $G$ for the coming proofs.

\begin{dfn}\label{dfn:G}
Let $n\in I_q$ and $G$ be an $n$-vertex $C_4$-free graph.
As usual, we write $d(v)$ for the degree of a vertex $v$ in $G$.
The deficiency of a vertex $v$ in $G$ is defined by $$f(v)=q+1-d(v).$$
The deficiency of a subset $A\subseteq V(G)$ is given by $f(A)=\sum_{v\in A}f(v)$.
For each integer $i\geq 0$, we denote $S_i$ by the set of vertices of degree $i$ in $G$.
Finally, we let $S=\cup_{i\leq q} S_i$ and $S^+ =\cup_{j\geq q+2} S_j$.
\end{dfn}

Note that $V(G)=S\cup S_{q+1}\cup S^+$, and a vertex $v$ is in $S$ (or in $S^+$) if and only if it has positive (or negative) deficiency.
Next we prove some estimations on vertex degrees in $C_4$-free graphs.

\begin{lem}\label{equ: f(N(u))}
Let $n\in I_q$ and $G$ be an $n$-vertex $C_4$-free graph.
If we write $N(v)$ for the neighborhood of a vertex $v$ in $G$,
then we have $f(N(v))\geq q d(v)-n+1$.
\end{lem}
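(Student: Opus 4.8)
The plan is to translate the claimed inequality into a statement purely about degree sums, and then to exploit the $C_4$-free hypothesis through a disjointness argument. First I would unwind the definition of deficiency: since $f(u)=q+1-d(u)$, summing over the neighborhood gives
$$f(N(v))=\sum_{u\in N(v)}\big(q+1-d(u)\big)=(q+1)\,d(v)-\sum_{u\in N(v)}d(u).$$
Substituting this into the desired bound $f(N(v))\geq q\,d(v)-n+1$ and cancelling the common term $q\,d(v)$, the assertion becomes equivalent to the clean inequality
$$\sum_{u\in N(v)}d(u)\leq d(v)+n-1.$$

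The heart of the argument, and the only place where $C_4$-freeness is used, is the observation that the ``private'' neighborhoods $N(u)\setminus\{v\}$, taken over all $u\in N(v)$, are pairwise disjoint subsets of $V(G)\setminus\{v\}$. Indeed, if two distinct neighbors $u,u'$ of $v$ shared a common neighbor $w\neq v$, then $v,u,w,u'$ would be four distinct vertices forming a cycle $v\,u\,w\,u'\,v$ of length four, contradicting the hypothesis. One should verify here that the four vertices are genuinely distinct: this holds because $u\neq u'$ and $w\neq v$ by assumption, while $w$ cannot coincide with $u$ or $u'$ since it is adjacent to each of them. I expect this disjointness claim to be the crux of the proof; the remaining steps are routine counting.

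With disjointness in hand, the conclusion follows immediately. Since each $u\in N(v)$ is itself adjacent to $v$, we have $|N(u)\setminus\{v\}|=d(u)-1$, and pairwise disjointness inside the $(n-1)$-element set $V(G)\setminus\{v\}$ yields
$$\sum_{u\in N(v)}\big(d(u)-1\big)=\sum_{u\in N(v)}\big|N(u)\setminus\{v\}\big|\leq n-1.$$
Rearranging this gives $\sum_{u\in N(v)}d(u)\leq d(v)+n-1$, which is exactly the reformulated inequality and hence completes the proof.
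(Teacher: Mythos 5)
Your proof is correct and follows essentially the same route as the paper's: both reduce the claim to the bound $\sum_{u\in N(v)}(d(u)-1)\le n-1$, which follows because $C_4$-freeness forces the sets $N(u)\setminus\{v\}$, $u\in N(v)$, to be pairwise disjoint. The only difference is that you spell out the verification that a common neighbor would yield four distinct vertices forming a $C_4$, a detail the paper leaves implicit.
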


\begin{proof}
As $G$ is $C_4$-free,
all subsets $N(v_i)\backslash \{v\}$ for $v_i\in N(v)$ are disjoint.
So we have
$\sum_{v_i\in N(v)} (d(v_i)-1)=\sum_{v_i\in N(v)}  |N(v_i)\backslash \{v\}| \leq n-1.$
This implies that
\begin{equation*}
 f(N(v))=\sum_{v_i\in N(v)} (q+1-d(v_i))=qd(v)-\sum_{v_i\in N(v)} (d(v_i)-1)\ge qd(v)-n+1,
\end{equation*}
as desired.
\end{proof}

The coming lemma provides a crucial technical tool for later sections. Its proof idea is rooted in a lemma of \cite{Fur83}.

\begin{lem}\label{lem:counting 2-path_2}
Let $G$ be an $n$-vertex $C_4$-free graph with $m$ edges. Let $v_i$'s for $i\in [n]$ be vertices of $G$ of degree $d_i$.
If there exists some vertex $v$ such that $\{v_i\}_{i\in I}\subseteq N(v)$ for $I\subseteq [n]$ with $|I|=k$, then
\begin{equation*}
 \binom{n-\sum_{i\in I} d_i+k-1}{2}
\ge (n-k)\binom{\frac{2m-\sum_{i\in I}d_i+(k-1)d(v)-nk+ k}{n-k}}{2}.
\end{equation*}
\end{lem}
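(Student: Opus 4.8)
The plan is to run a weighted count of paths of length two (``cherries'') whose two endpoints are forced to avoid the neighborhoods of the vertices in $A:=\{v_i\}_{i\in I}$, and then to play an exact upper bound coming from $C_4$-freeness against a convexity lower bound. Write $D=\sum_{i\in I}d_i$, set $\Gamma=\{v\}\cup\bigcup_{w\in A}N(w)$, and let $R=V(G)\setminus\Gamma$. Since $G$ is $C_4$-free, for distinct $w,w'\in A\subseteq N(v)$ the sets $N(w)\setminus\{v\}$ and $N(w')\setminus\{v\}$ are disjoint (a shared vertex would close a $C_4$ through $v$), exactly as in the proof of Lemma~\ref{equ: f(N(u))}. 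Hence $|\Gamma|=1+\sum_{w\in A}(d(w)-1)=D-k+1$, so $|R|=n-D+k-1$, which is precisely the top entry of the binomial on the left-hand side.

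First I would establish the upper bound. Because $G$ is $C_4$-free, every pair of vertices has at most one common neighbor, so the number of cherries with both endpoints in $R$ is at most $\binom{|R|}{2}=\binom{n-D+k-1}{2}$, the left-hand side. Counting the same cherries by their apex gives $\sum_{w\in V(G)}\binom{|N(w)\cap R|}{2}$; moreover any $w\in A$ satisfies $N(w)\subseteq\Gamma$, so $N(w)\cap R=\emptyset$ and such apexes contribute nothing. Thus the count equals $\sum_{w\in V(G)\setminus A}\binom{|N(w)\cap R|}{2}$, a sum over exactly $n-k$ vertices.

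Next I would produce the matching lower bound by convexity. Since $x\mapsto\binom{x}{2}$ is convex, Jensen's inequality gives $\sum_{w\in V(G)\setminus A}\binom{|N(w)\cap R|}{2}\ge (n-k)\binom{\bar t}{2}$, where $\bar t=\tfrac{1}{n-k}\sum_{w\in V(G)\setminus A}|N(w)\cap R|$. Counting incidences between $R$ and $V(G)$, and using once more that apexes in $A$ meet no vertex of $R$, yields $\sum_{w\in V(G)\setminus A}|N(w)\cap R|=\sum_{u\in R}d(u)=2m-d(v)-\sum_{u\in B}d(u)$, where $B=\bigcup_{w\in A}(N(w)\setminus\{v\})$. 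The crux is to bound $\sum_{u\in B}d(u)$ from above: rewriting Lemma~\ref{equ: f(N(u))} as $\sum_{u\in N(w)}d(u)\le d(w)+n-1$ for every $w$, and using that $B$ is a disjoint union, I obtain $\sum_{u\in B}d(u)=\sum_{w\in A}\big(\sum_{u\in N(w)}d(u)-d(v)\big)\le D+k(n-1-d(v))$. Substituting this gives $\sum_{u\in R}d(u)\ge 2m-D+(k-1)d(v)-nk+k$, that is, $\bar t\ge t$ with $t$ the argument of the binomial on the right-hand side.

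Finally, chaining these bounds gives $\binom{n-D+k-1}{2}\ge (n-k)\binom{\bar t}{2}\ge (n-k)\binom{t}{2}$, where the last step uses that $\binom{x}{2}$ is nondecreasing once $x\ge 1/2$ (and the inequality is trivial otherwise, as the right-hand side is then nonpositive); in the intended applications $\bar t$ and $t$ are large, so this is immediate. I expect the main obstacle to be the bookkeeping in the third paragraph: correctly choosing the endpoint set $R$ so that its size matches the left-hand binomial, checking that apexes in $A$ drop out, and—most importantly—recognizing that the per-vertex degree-sum bound $\sum_{u\in N(w)}d(u)\le d(w)+n-1$ extracted from Lemma~\ref{equ: f(N(u))} is exactly what converts the disjointness of the sets $N(w)\setminus\{v\}$ into the numerator appearing on the right-hand side.
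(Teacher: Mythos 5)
Your proof is correct, and its skeleton is the same as the paper's: count cherries with both endpoints outside $X=\Gamma=\bigcup_{i\in I}N(v_i)$, use $C_4$-freeness to cap this count by $\binom{|R|}{2}$ with $|R|=n-\sum_{i\in I}d_i+k-1$, note that apexes in $A$ contribute nothing, and finish with Jensen. The one step you do genuinely differently is the lower bound on the total $\sum_{w\notin A}|N(w)\cap R|$. The paper bounds this apex by apex: for $w\notin N(v)$ it uses $|N(w)\setminus X|\ge d(w)-k$ (since $w$ shares at most one neighbor with each $v_i$), refined to exactly $d(w)-1$ for $w\in N(v)\setminus A$, and then applies Jensen to these per-term bounds. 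You instead double count incidences to get the exact identity $\sum_{w\notin A}|N(w)\cap R|=\sum_{u\in R}d(u)=2m-d(v)-\sum_{u\in B}d(u)$ and bound $\sum_{u\in B}d(u)$ from above via the neighborhood degree-sum estimate $\sum_{u\in N(w)}d(u)\le d(w)+n-1$ extracted from Lemma~\ref{equ: f(N(u))}; the two computations produce the identical numerator $2m-\sum_{i\in I}d_i+(k-1)d(v)-nk+k$. Your variant is arguably cleaner in that Jensen is applied to the true average and monotonicity of $\binom{x}{2}$ is invoked only once at the end (a caveat you correctly flag, and which is equally present, and equally harmless, in the paper's version since the relevant arguments are of order $q$ in all applications); the paper's variant has the small advantage of not needing the monotonicity step separately from the per-term estimates. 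Either way the lemma is established.
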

\begin{proof}
We say $v_iv_jv_\ell$ is a 2-path if $v_iv_j,v_jv_\ell\in E(G)$.
Let $X=\bigcup_{i\in I}N(v_i)$ and we count the number $M$ of 2-paths with both end-points in $V(G)\backslash X$.
Since $G$ is $C_4$-free, all $N(v_i)\backslash \{v\}$ are disjoint, implying that $|X|=\sum_{i\in I}( |N(v_i)|-1)+1=\sum_{i\in I} d_i-k+1$.
So it is evident that
$$M\le \binom{n-|X|}{2} = \binom{n-\sum_{i\in I} d_i+k-1}{2}.$$
On the other hand, for any $j\in [n]$ there are $\binom{|N(v_j)\backslash X|}{2}$ many counted 2-paths with the middle-point $v_j$.
For any $j\notin I$ we have $|N(v_j)\backslash X|\ge d_j-k$
and moreover, for $v_j\in N(v)\backslash \{v_i\}_{i\in I}$, we have $|N(v_j)\backslash X|= |N(v_j)\backslash \{v\} |= d_j-1$.
So it follows by Jensen's inequality that
\begin{align*}
M=\sum_{j\notin I}\binom{|N(v_j)\backslash X|}{2}
\geq \sum_{v_j\in \substack{N(v)\backslash \{v_i\}_{i\in I}}}\binom{d_j-1}{2} +\sum_{\substack{v_j\notin N(v)}} \binom{d_j-k}{2}\geq (n-k)\binom{\frac{L}{n-k}}{2},
\end{align*}
where $L=\sum_{v_j\in \substack{N(v)\backslash \{v_i\}_{i\in I}}}(d_j-1)+\sum_{\substack{v_j\notin N(v)}} (d_j-k)
=\sum_{j\notin I}d_j-(d(v)-k)-k(n-d(v)) =2m-\sum_{i\in I}d_i+(k-1)d(v)-nk+ k.$
Putting the above together, we finish the proof.
\end{proof}

Lastly, we would like to give a outline of the proof of Theorem~\ref{thm: q^2+q+1-r} (the proof of Theorem~\ref{thm: q^2+q+1+r} can be proved by a similar approach).
The proof is inspired by the work of F\"uredi \cite{Fur83,Fur96}.
There are several new ingredients as well.
The first ingredient comes from Lemma~\ref{lem:r0}, which roughly says that one only needs to consider $C_4$-free graphs with very large minimum degree.
Another ingredient is the deficiency $f(\cdot)$ in Definition~\ref{dfn:G}.\footnote{In literature where certain circumstances apply, the deficiency of a vertex $v$ is defined by $\max\{q+1-d(v),0\}$.}
Under this definition, we have $f(V(G))=O(rq)$ and we can maximize the benefits of this estimate.
Putting these ingredients together with the technical tool Lemma~\ref{lem:counting 2-path_2}, we prove in Lemma~\ref{lem:max-degree and f}
that the size of $S^+$ (i.e., the number of vertices whose degree exceeds $q+1$) can be bounded from above by $O(r^2)$.
This will eventually lead to a contradiction by assigning appropriate weights to the edges between the sets $S_{q+1}$ and $S$.

\section{Proof of Theorem \ref{thm: q^2+q+1-r}}
We first show that restricted to the range of integers considered in Theorem \ref{thm: q^2+q+1-r},
one may always assume that there exists some {\it extremal graph} with large minimum degree.\footnote{Throughout the rest of the paper,
a graph $G$ is called an \it{extremal graph} if it is $C_4$-free and has the maximum number $\ex(|V(G)|,C_4)$ of edges.}

\begin{lem}\label{lem:r0}
Let $q, r$ be integers satisfying $1\le r\le 0.3q$.
Assume that $\ex(q^2+q+1-r,C_4)\ge \frac 12 q(q+1)^2-\alpha rq $, where $0.2\leq \alpha \leq 1$.
Then there exists an integer $r_0\in [r, 3r]$ such that
$$\ex(q^2+q+1-r_0,C_4)\ge \frac 12 q(q+1)^2-\alpha r_0q$$
and
$$\ex(q^2+q+1-r_0,C_4)-\ex(q^2+q -r_0 ,C_4)\ge 0.2q.$$
In particular, the latter inequality shows that the minimum degree of any extremal graph on $q^2+q+1-r_0$ vertices is at least $0.2q$.
\end{lem}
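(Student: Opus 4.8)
The plan is to dispose first of the ``in particular'' clause, which is a routine deletion argument, and then to locate the index $r_0$ by tracking how the extremal function decreases as vertices are removed. Throughout I write $a_i=\ex(q^2+q+1-i,C_4)$ and $\mathrm{thr}(i)=\frac12 q(q+1)^2-\alpha i q$, so that the two desired conclusions read $a_{r_0}\ge \mathrm{thr}(r_0)$ (call this condition (1)) and $a_{r_0}-a_{r_0+1}\ge 0.2q$ (condition (2)), where $a_{r_0+1}=\ex(q^2+q-r_0,C_4)$ since $q^2+q+1-(r_0+1)=q^2+q-r_0$. For the minimum-degree statement, if $G$ is extremal on $q^2+q+1-r_0$ vertices and $v$ has minimum degree, then deleting $v$ leaves a $C_4$-free graph on $q^2+q-r_0$ vertices, whence $a_{r_0+1}\ge a_{r_0}-d(v)$; combined with condition (2) this forces $d(v)\ge 0.2q$.

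The key inputs will be that $a_i$ is non-increasing in $i$ (add an isolated vertex), the hypothesis $a_r\ge \mathrm{thr}(r)$, and the bound \eqref{equ:Reiman} rewritten through the expansion \eqref{equ:form} as $a_i\le \frac12 q(q+1)^2-\frac34 iq+O(q)$, valid uniformly for the relevant $i$ (here $3r\le 0.9q$). I would then split according to whether condition (1) ever fails on $(r,3r]$. In Case A, where there is a smallest index $j\in[r+1,3r]$ with $a_j<\mathrm{thr}(j)$, I set $r_0=j-1$: condition (1) holds at $r_0$ by minimality of $j$ together with the hypothesis at $r$, while $a_{r_0}-a_{r_0+1}>\mathrm{thr}(j-1)-\mathrm{thr}(j)=\alpha q\ge 0.2q$ because $\alpha\ge 0.2$.

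In Case B, where condition (1) holds at every $i\in[r,3r]$, I would run an averaging argument over the $2r+1$ consecutive gaps. The lower bound $a_r\ge \mathrm{thr}(r)$ together with the Reiman estimate at index $3r+1$ yields the telescoping total $a_r-a_{3r+1}\ge(\tfrac94-\alpha)rq-O(q)\ge \tfrac54 rq-O(q)$, using $\alpha\le 1$. Hence some gap satisfies $a_{r_0}-a_{r_0+1}\ge \tfrac{a_r-a_{3r+1}}{2r+1}\approx 0.6q$, comfortably exceeding $0.2q$; and since every index of $[r,3r]$ lies in the region where condition (1) holds, this same $r_0$ also meets condition (1).

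The main obstacle is precisely Case B, and more specifically the interplay between the slope $\alpha q$ of the threshold and the true decay rate $\tfrac34 q$ of $\ex$. When $\alpha$ is close to $1$ the threshold drops faster than the Reiman bound, so a single gap at the top of the window need not be large; this is exactly why one cannot simply take $r_0=\max\{i:a_i\ge\mathrm{thr}(i)\}$ and must instead amortize the guaranteed total decrease $\gtrsim \tfrac54 rq$ over the entire window $[r,3r]$. Choosing the window length as the fixed multiple $3r$ is what makes the averaged gap beat $0.2q$ for every admissible $\alpha\in[0.2,1]$, and I would double-check that the suppressed $O(q)$ errors from \eqref{equ:Reiman} stay below this margin throughout the stated range $1\le r\le 0.3q$.
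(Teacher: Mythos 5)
Your proposal is correct and takes essentially the same route as the paper's proof: both arguments work over the window $[r,3r]$ and rest on the same three slope comparisons (the threshold $\mathrm{thr}(i)$ drops by $\alpha q\ge 0.2q$ per step, while the Reiman bound $h(x)=\frac x4(1+\sqrt{4x-3})$ with $h'>\frac34\sqrt{x}$ forces a total decrease of about $(\frac94-\alpha)rq\ge\frac54 rq$ across the window, which dominates the $O(q)$ error). The paper organizes this as a contradiction, extracting a maximal run of small gaps and telescoping condition (1) along it, whereas you split directly on where condition (1) first fails (your Case A is exactly the contrapositive of the paper's propagation step) and average the gaps in the complementary case (the same pigeonhole as the paper's endpoint contradiction at $s=3r$); the content is identical.
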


\begin{proof}
Suppose for a contradiction that there does not exist such an integer $r_0\in [r,3r]$.
So we have $\ex(q^2+q+1-r, C_4)-\ex(q^2+q-r, C_4)<0.2q$.
Let $s$ be the largest integer in $[r+1, 3r]$ such that
\begin{equation}\label{equ: 0.2q}
\ex(q^2+q+1-x ,C_4)-\ex(q^2+q -x  ,C_4)< 0.2q  \text{ holds for all integers }   x\in [r,s-1].
\end{equation}
Then either (i) $s=3r$ or (ii) $\ex(q^2+q+1-s,C_4)-\ex(q^2+q -s,C_4)\ge 0.2q$.
Summing up \eqref{equ: 0.2q} for all integers $x\in [r,s-1]$, we obtain
\begin{equation}\label{equ:1-s}
\ex(q^2+q+1-s,C_4)>\ex(q^2+q+1-r ,C_4)-0.2(s-r)q\geq \frac 12 q(q+1)^2 -\alpha rq -0.2(s-r)q.
\end{equation}
As $\alpha\geq 0.2$, we see $\ex(q^2+q+1-s,C_4)\ge \frac 12 q(q+1)^2- \alpha sq.$
If (ii) occurs, then $s$ would be a desired integer.
So (i) occurs, i.e., $s=3r$.
Since $q\geq 4$ and $\alpha\leq 1\leq r$, we have $\frac34sq-\frac12(q+1)-\alpha rq-0.2(s-r)q=q\cdot(1.85r-\alpha r-1/2-1/q)>0$.
This inequality together with \eqref{equ:1-s} give that
\begin{equation}\label{equ:1-s'}
\ex(q^2+q+1-s,C_4)>\frac 12 (q+1)(q^2+q+1)-\frac 34 sq.
\end{equation}

Let $h(x)=\frac x4 (1+\sqrt{4x-3})$.
By \eqref{equ:Reiman}, we see that $\ex(n,C_4)\leq h(n)$ holds for all $n$.
Note that $h(q^2+q+1)=\frac 12 (q+1)(q^2+q+1)$ and $h'(x)=\frac{1+\sqrt{4x-3}}{4}+\frac{x}{2\sqrt{4x-3}}>\frac34 \sqrt{x}$ for $x\geq 1$.
So $h(n+1)-h(n)\geq h'(n)> \frac 34 \sqrt n\geq \frac 34 q$ holds for every integer $n\ge q^2+q+1-s\geq q^2$, where $s=3r\leq 0.9q$.
Adding up the above inequality for all integers $n$ in $[q^2+q+1-s,q^2+q]$,
we have
$$h(q^2+q+1-s)< h(q^2+q+1)-\frac 34 q\cdot s=\frac 12 (q+1)(q^2+q+1)-\frac 34sq< \ex(q^2+q+1-s,C_4),$$
where the last inequality is from \eqref{equ:1-s'}.
Clearly, this contradicts the bound $\ex(n,C_4)\leq h(n)$.
This final contradiction shows that the desired $r_0\in [r,3r]$ indeed exists, completing the proof.
\end{proof}

Recall Definition \ref{dfn:G}. In the next lemma we show that in $C_4$-free graphs with large minimum degree and sufficiently many edges, the size of $S^+$ can be bounded from above.

\begin{lem}\label{lem:max-degree and f}
Let $q$ be a sufficiently large integer and $r$ be an integer satisfying $0\le r\le 0.033q$.
Let $G$ be a $C_4$-free graph on $q^2+q+2-r$ vertices, with at least $\frac 12 q(q+1)^2-rq$ edges and of minimum degree at least $0.2q$.
Then $|S^+|\leq -f(S^+)\le 4r^2+16r+18$.
\end{lem}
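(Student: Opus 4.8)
The plan is to combine the deficiency budget from Definition~\ref{dfn:G} with the 2-path counting tool of Lemma~\ref{lem:counting 2-path_2}.

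The first inequality $|S^+|\le -f(S^+)$ is immediate: each $v\in S^+$ has $d(v)\ge q+2$, so $-f(v)=d(v)-(q+1)\ge 1$, and summing over $S^+$ gives the claim. To bound $-f(S^+)$ I would first record the global budget. Since $f(V(G))=(q+1)n-2m$ with $n=q^2+q+2-r$ and $m\ge\frac12 q(q+1)^2-rq$, a direct computation yields $f(V(G))\le (r+2)q-r+2$. As $f(S_{q+1})=0$, we have $f(S)+f(S^+)=f(V(G))$; writing $F_+=f(S)$ and $F_-=-f(S^+)$ this says $F_+-F_-=f(V(G))\le (r+2)q-r+2$, so $F_+=F_-+f(V(G))$.

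For each $v\in S^+$ put $t=d(v)-(q+1)\ge 1$ and apply Lemma~\ref{lem:counting 2-path_2} with $I=N(v)$ and $k=d(v)$, using the identity $\sum_{w\in N(v)}d(w)=(q+1)d(v)-f(N(v))$. Rewriting the resulting inequality in terms of $f(N(v))$, the top entry of the left binomial is $f(N(v))-qt-r+1$, which is exactly the nonnegative slack in Lemma~\ref{equ: f(N(u))}, while the average appearing on the right equals $-t+\frac{f(N(v))-f(V(G))}{n-d(v)}$, a lower-order correction to $-t$ since $n-d(v)$ is of order $q^2$. The inequality then forces the left top entry to be of order at least $qt$, whence $f(N(v))\ge 2qt-O(t+r)$. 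Thus every vertex of $S^+$ is surrounded by neighbors of large total deficiency, proportional to its excess degree; feeding in $f(N(v))\le f(N(v)\cap S)\le F_+$ already bounds the maximum excess degree by $O(F_+/q)$.

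Finally I would sum this estimate over $S^+$ and double count, using $v\in N(w)\iff w\in N(v)$:
\[
2q\,F_-\le\sum_{v\in S^+}f(N(v))=\sum_{w}f(w)\,|N(w)\cap S^+|\le\sum_{w\in S}f(w)\,|N(w)\cap S^+|,
\]
where the last step discards the nonpositive contributions of $S_{q+1}$ and $S^+$. The crux, and the step I expect to be the main obstacle, is to bound the right-hand side quadratically better than the trivial estimate $|N(w)\cap S^+|\le d(w)\le q$: that trivial bound only gives $F_-\le f(V(G))=O(rq)$, which is far weaker than the target $O(r^2)$. To gain the missing factor of order $r/q$ I would invoke $C_4$-freeness in the sharp form that for distinct $v,v'\in S^+$ the punctured neighborhoods $N(v)\setminus\{w\}$ and $N(v')\setminus\{w\}$ are disjoint, so the number of pairs in $S^+$ with a common $S$-neighbor is at most $\binom{|S^+|}{2}$; coupling this with a Cauchy--Schwarz step reduces the task to a sufficiently sharp upper bound on $\sum_{w\in S}f(w)^2$. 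Here the naive degree-squared inequality gives only $\sum_{w\in S}f(w)^2=O(rq^2)$, which is exactly a factor $q/r$ too lossy, so the genuinely delicate point is to control this second moment, presumably through a further application of Lemma~\ref{lem:counting 2-path_2} centered now at the low-degree vertices of $S$. Once $F_-=O(r^2)$ is in hand, the explicit constants $4r^2+16r+18$ follow by retaining the lower-order terms throughout, and $|S^+|\le F_-$ completes the proof.
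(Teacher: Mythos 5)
Your opening moves (the trivial bound $|S^+|\le -f(S^+)$, the budget $f(V(G))\le (r+2)q-r+2$, and the double count $\sum_{v\in S^+}f(N(v))=\sum_w f(w)\,|N(w)\cap S^+|$) agree with the paper, but there are two genuine gaps. First, your application of Lemma~\ref{lem:counting 2-path_2} with $I=N(v)$ and $k=d(v)\ge q+2$ is unsound. The lemma's proof relies on the termwise bound $\binom{|N(v_j)\setminus X|}{2}\ge\binom{d_j-k}{2}$, which fails whenever $d_j<k$ (the right side is positive while the left can vanish), and in your application $k=d(v)$ exceeds the degree of almost every vertex of $G$. The conclusion of the lemma is in fact false in this regime: in a polarity graph on $q^2+q+1$ vertices with $\frac12 q(q+1)^2$ edges, take $v$ of degree $q+1$ none of whose neighbors is one of the $q+1$ vertices of degree $q$ (such $v$ exist), and take $I=N(v)$, $k=q+1$; then the left side is $\binom{0}{2}=0$ while the right side equals $q^2\binom{-(q+1)/q^2}{2}\approx q/2>0$. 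So your intermediate claim $f(N(v))\ge 2qt-O(t+r)$ has no valid derivation by this route --- and even if it were true, replacing the factor $q-1$ from Lemma~\ref{equ: f(N(u))} by $2q$ gains only a constant factor, nowhere near the factor of order $q/r$ you need.

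Second, and decisively, the step you yourself call ``the crux'' is left unproven: you never establish a nontrivial upper bound on $|N(w)\cap S^+|$, and the Cauchy--Schwarz detour you sketch is reduced to a second-moment bound on $\sum_{w\in S}f(w)^2$ that you explicitly say you cannot prove. This is precisely where the paper's proof does its real work, and it uses Lemma~\ref{lem:counting 2-path_2} in the opposite regime from yours: assuming some vertex $w$ has $k=3r+9$ neighbors $v_1,\dots,v_k$ in $S^+$, one takes $I$ to index those neighbors, so that $s=\sum_{i\in I}d_i\ge k(q+2)$ is large; then the left binomial entry $n-s+k-1$ is small while the right-hand average remains large and positive (all quantities stay in the range where the lemma is legitimate), and the computation of Appendix~A gives a contradiction. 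Hence \emph{every} vertex has at most $3r+8$ neighbors in $S^+$. Feeding this into your own double count immediately yields
\begin{equation*}
(q-1)\bigl(-f(S^+)\bigr)\le \sum_{w\in S}(3r+8)f(w)=(3r+8)f(S)\le (3r+8)\bigl((r+2)q-r+2-f(S^+)\bigr),
\end{equation*}
which rearranges to $-f(S^+)\le 4r^2+16r+18$. So the skeleton of your argument matches the paper's, but the two load-bearing estimates in it are, respectively, invalid and missing.
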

\begin{proof}
First we claim that for every vertex $v$, $|N(v)\cap S^+|\leq 3r+8$.
Suppose not. Let $k=3r+9$. Then there exist vertices $v$ and $v_i$'s for $i\in [k]$ such that every $v_i\in N(v)\cap S^+$.
Let $d_i$ be the degree of $v_i$ and let $s=\sum_{i\in[k]} d_i$.
Since $s-k=\sum_{i\in[k]} |N(v_i)\backslash \{v\}|\le n-1$, we have $k(q+2)\le s \le n-1+k\le q^2+2q$.
Let $n=q^2+q+2-r$ and $m_0=\frac 12 q(q+1)^2- rq$.
By the proof of Appendix A, we get
\begin{equation}\label{equ: 3r+9}
F(q,r,s):=2(n-k)\binom{n-s+k-1}{2}-2(n-k)^2\binom{\frac{2m_0-s+(k-1) 0.2q -nk+ k}{n-k}}{2}<0.
\end{equation}
Since $e(G)\geq m_0$ and $d(v)\ge 0.2q$, using Lemma~\ref{lem:counting 2-path_2}, we can derive that
$$0>\frac{F(q,r,s)}{2(n-k)}\geq \binom{n-s+k-1}{2}-(n-k)\binom{\frac{2e(G)-s+(k-1)d(v) -nk+ k}{n-k}}{2}\geq 0,$$
a contradiction. This proves the claim that $|N(v)\cap S^+|\le 3r+8$ holds for every vertex $v$.

Next we consider the deficiency.
Let $u\in S^+$. Clearly $f(u)\le -1$. By Lemma~\ref{equ: f(N(u))}, we have
\begin{equation}\label{equ: fNu1}
\begin{split}
f(N(u))\ge qd(u)-n+1
=-qf(u)+r-1 \ge (q-1)(-f(u)).
\end{split}
\end{equation}
Now define a weight function $w$ on the edges $uv$ with $u\in S^+$ and $v\in S$ by assigning $w(uv)=f(v)$.
Let us count the total weight $W$ of these edges.
Every vertex $u\in S^+$ contributes at least $f(N(u)\cap S)\geq f(N(u))$,
so we can use \eqref{equ: fNu1} to obtain that
$$W\ge \sum_{u\in S^+} f(N(u))\ge (q-1)\sum_{u\in S^+}(-f(u))=(q-1)(-f(S^+)).$$
On the other hand, by the above claim, each vertex $v\in S$ has at most $3r+8$ neighbors in $S^+$,
so it contributes at most $3r+8$ times of its deficiency. Putting them together, we have
$$(q-1)(-f(S^+))\le W\le \sum_{v\in S}(3r+8)f(v)=(3r+8)f(S).$$
By the definition of $f$, we get $f(S)+f(S^+)=f(V(G))= n(q+1)-2e(G)\le (q^2+q+2-r)(q+1)-(q(q+1)^2-2rq)=(r+2)q-r+2$.
Therefore, we can derive
$$(q-1)(-f(S^+))\le (3r+8)f(S)\le (3r+8)\big((r+2)q-r+2-f(S^+)\big).$$
Further rearranging this inequality gives that
$$-f(S^+)\le \frac{(3r+8)\big((r+2)q-r+2 \big)}{q-3r-9}\leq \frac{(3r+8)(r+2)(q+1)}{0.9(q+1)}\leq 4r^2+16r+18,$$
where the last two inequalities hold because $0\le r\le0.033q$ and $q$ is large.
This finishes the proof.
\end{proof}

Now we are ready to prove Theorem \ref{thm: q^2+q+1-r}.

\begin{proof}[\bf Proof of Theorem \ref{thm: q^2+q+1-r}]
Let $q, r$ be sufficiently large integers satisfying $r\le 0.01q$.
Suppose for a contradiction that $\ex(q^2+q+1-r,C_4)\ge \frac 12 q(q+1)^2-\alpha rq$, where $\alpha=0.92$.
By Lemma \ref{lem:r0}, there exists some integer $r_0$ with $r\le r_0\le 3r\le 0.03q$ such that
any extremal graph $G$ on $q^2+q+1-r_0$ vertices has at least $\frac 12 q(q+1)^2-\alpha r_0 q$ edges and minimum degree at least $0.2q$.

Let $n=q^2+q+1-r_0$. By the definition of deficiency,
\begin{align*}
f(V(G))&=(q+1)n-2e(G)\leq (q+1)(q^2+q+1-r_0)- (q(q+1)^2-2\alpha r_0q)\\
&= (2\alpha -1)r_0q-r_0+q+1\leq (2\alpha -1)r_0q +q
\end{align*}
Applying Lemma \ref{lem:max-degree and f} to $G$, we have
$|S^+|\le -f(S^+)\le 4(r_0+1)^2+16(r_0+1)+18=4r_0^2+O(q).$
So
\begin{equation}\label{equ:S}
|S|\le f(S)=f(V(G))-f(S^+)\le (2\alpha -1)r_0q+4r_0^2+O(q).
\end{equation}
We also have
\begin{equation}\label{equ:S q+1}
|S_{q+1}|=(q^2+q+1-r_0)-|S^+|-|S|
\ge \left(1-(2\alpha -1)\frac{r_0}{q}-8\frac{r_0^2}{q^2}\right)q^2+O(q).
\end{equation}
For any vertex $u\in S_{q+1}$, by Lemma \ref{equ: f(N(u))} we can derive
\begin{align*}
 f(N(u))\geq qd(u)-n+1\geq q(q+1)-(q^2+q+1-r_0)+1=r_0.
\end{align*}

Now define a weight function $w$ on the edges $uv$ with $u\in S_{q+1}$ and $v\in S$  by assigning $w(uv)=f(v)$.
Let $W$ be the total weight of these edges.
Every vertex $u\in S_{q+1}$ contributes at least $f(N(u)\cap S)\geq f(N(u))$ to $W$,
while each vertex $v\in S$ contributes at most $f(v)d(v)\le f(v) q$. Thus
\begin{equation}\label{equ: Sq+1 f(S)}
|S_{q+1}|r_0\le \sum_{u\in S_{q+1}} f(N(u)) \le W\le \sum_{v\in S} f(v) q \le f(S)q.
\end{equation}
Using \eqref{equ:S} and \eqref{equ:S q+1}, we have that
\begin{align*}
f(S)q-|S_{q+1}|r_0
&\le \big((2\alpha -1)r_0q+4r_0^2\big)q-
\left(1-(2\alpha -1)\frac{r_0}{q}-8\frac{r_0^2}{q^2}\right)r_0q^2+O(q^2)\\
&=\left(8\frac{r_0^2}{q^2}+(2\alpha+3)\frac{r_0}{q}+2\alpha -2 \right)r_0q^2+o(r_0q^2),
\end{align*}
where $r_0\geq r$ is sufficiently large.
Recall that $\alpha=0.92$ and $r_0\le 0.03q$.
We see $F(\frac{r_0}{q})=8\frac{r_0^2}{q^2}+(2\alpha+3)\frac{r_0}{q}+2\alpha -2 $ is a quadratic function on $\frac{r_0}{q}$ with a negative axis of symmetry.
So $F(\frac{r_0}{q})\leq F(0.03)=-0.0076<0$.
Therefore $f(S)q-|S_{q+1}|r_0\leq (F(\frac{r_0}{q})+o(1))\cdot r_0q^2<0$, which contradicts to \eqref{equ: Sq+1 f(S)}.
This completes the proof of Theorem \ref{thm: q^2+q+1-r}.
\end{proof}

We would like to remark that the above proof can be modified to show: for any $\epsilon>0$,
\begin{equation}\label{equ:tight-eps}
\ex(q^2+q+1-r,C_4)\le \frac 12q(q+1)^2-(1-\epsilon)rq
\end{equation}
holds whenever $r/q=O(\epsilon)$ and $r=\Omega(1/\epsilon).$

To conclude this section, we now prove Corollary~\ref{coro:asy-bound} using the above inequality.

\begin{proof}[\bf Proof of Corollary~\ref{coro:asy-bound}]
Let $\epsilon>0$ be any real.
Let $q$ be a prime power and $r$ be an integer such that $r=O(\epsilon q)$ and $r=\Omega(1/\epsilon)$.
By results of \cite{Br66,ERS,Fur83,Fur96}, there exists some extremal graph $G$ on $q^2+q+1$ vertices with $\frac 12q(q+1)^2$ edges,
which has exactly $q+1$ vertices of degree $q$ and all other vertices of degree $q+1$.
Deleting any $r$ vertices of degree $q$ in $G$, one can obtain a $C_4$-free graph on $q^2+q+1-r$ vertices with at least $\frac 12q(q+1)^2-rq$ edges.
This together with \eqref{equ:tight-eps} show that $$\frac 12q(q+1)^2-rq\leq \ex(q^2+q+1-r,C_4)\leq \frac 12q(q+1)^2-(1-\epsilon)rq,$$
proving the corollary.
\end{proof}

\section{Proof of Theorem \ref{thm: q^2+q+1+r}}
Let $q$ be a sufficiently large integer.
We first prove that for any integer $r\in [1, 0.3q]$, it holds that
\begin{equation}\label{equ:r<0.3q}
\ex(q^2+q+1+r,C_4)\leq \frac 12(q^2+q+1+r)(q+1).
\end{equation}
Suppose for a contradiction that there exists some integer $r\in [1, 0.3q]$
satisfying $\ex(q^2+q+1+r,C_4)>\frac 12(q^2+q+1+r)(q+1)$.
Let $G$ be an extremal graph on $q^2+q+1+r$ vertices.
We will complete the proof of \eqref{equ:r<0.3q} by deriving a contradiction that $e(G)=\frac 12(q^2+q+1+r)(q+1).$

To see this, we first claim that every vertex $v$ in $G$ with degree $d(v)\ge 0.7q$ satisfies $|N(v)\cap S^+|< 0.55q$.
Suppose not. Let $k=0.55q$. Then there exist vertices $v$ and $v_i$'s for $i\in [k]$ such that every $v_i\in N(v)\cap S^+$.
Let $n=q^2+q+1+r$ and $s=\sum_{i\in[k]} d_i$.
Since $\sum_{i\in[k]} (d_i-1)=\sum_{i\in[k]} |N(v_i)\backslash \{v\}|\le n-1$, we know $k(q+2)\le s \le n-1+k\le q^2+2q$.
Let $m_0=\frac 12 n(q+1)$ so that $e(G)\ge m_0$.
By the calculations in Appendix B, we get
\begin{equation}\label{equ: 0.5q}
G(q,r,s):=2(n-k)\binom{n-s+k-1}{2}- 2(n-k)^2\binom{\frac{2m_0-s+(k-1)0.7q -nk+ k}{n-k}}{2}<0.
\end{equation}
Note that $d(v)\ge 0.7q$ and $e(G)\ge m_0$.
We can use Lemma~\ref{lem:counting 2-path_2} to derive that
$$0>\frac{G(q,r,s)}{2(n-k)}\geq \binom{n-s+k-1}{2}-(n-k)\binom{\frac{2e(G)-s+(k-1) d(v) -nk+ k}{n-k}}{2}\geq 0,$$
a contradiction.
So indeed, $|N(v)\cap S^+|< 0.55q$ holds for any vertex $v$ with  $d(v)\ge 0.7q$ in $G$.

Now consider the deficiency of vertices.
Any $u\in S^+$ has $f(u)\le -1$.
By Lemma \ref{equ: f(N(u))}, we have
\begin{equation}\label{equ: fNu}
\begin{split}
f(N(u))
&\ge qd(u)-n+1
=q(q+1-f(u))-(q^2+q+1+r)+1\\
&=-f(u)q-r
\ge (-f(u))\cdot(q-r).
\end{split}
\end{equation}

Define a weight function $w$ on the edges $uv$ with $u\in S^+$ and $v\in S$ by assigning $w(uv)=f(v)$.
Let $W$ be the total weight of these edges.
As every vertex $u\in S^+$ contributes at least $f(N(u)\cap S)\ge f(N(u))$, from \eqref{equ: fNu} we can get that
$$W\ge \sum_{u\in S^+} f(N(u))\ge (q-r)\sum_{u\in S^+}(-f(u))=(q-r)(-f(S^+)).$$
On the other hand, as we know $|N(v)\cap S^+|< 0.55q$ for vertices $v$ with $d(v)\ge 0.7q$,
we see that in fact every vertex $v\in S$ has at most $0.7q-1$ neighbors in $S^+$.
So it contributes at most $0.7q-1$ times of its deficiency.
Putting the above together, we get
$$(q-r)(-f(S^+))\le W\le \sum_{v\in S} ((0.7q-1) f(v))=(0.7q-1) f(S).$$
As $1\le r\le 0.3q$,
this implies that
\begin{equation}\label{equ:f(S^+)}
-f(S^+)\le f(S),
\end{equation}
where the equality holds if and only if $-f(S^+)= f(S)=0$.
By the definition of $f$, we have $f(S)+f(S^+)=f(V(G))= n(q+1)-2e(G)\le 0$.
That is, $f(S)\le -f(S^+)$.
In view of \eqref{equ:f(S^+)}, we can then derive $-f(S^+)= f(S)=0$, which says that $S=S^+=\emptyset$.
This also says that all vertices in $G$ have degree $q+1$,
so $e(G)=\frac 12 n(q+1)=\frac 12(q^2+q+1+r)(q+1)$.
However, this contradicts our assumption on the choice of $G$, thus completing the proof of \eqref{equ:r<0.3q}.

It remains to consider when $0.3q<r\leq 0.6q$.
In fact for any $n\in I_q^+$, by \eqref{equ:Reiman} that $\ex(n,C_4)\le \frac{n}{4}(1+\sqrt{4n-3})$,
we see that the minimum degree of any $n$-vertex extremal graph is at most $q+1$.
So it follows that $\ex(n,C_4)\le \ex(n-1,C_4)+(q+1)$ for any $n\in I_q^+$.
Then for $0.3q<r\leq 0.6q$, we can derive that
$$\ex(q^2+q+1+r,C_4)\le \ex(q^2+q+1+0.3q,C_4)+(r-0.3q)(q+1)\le \frac 12(q^2+q+1+2r-0.3)(q+1),$$
where the last inequality follows by \eqref{equ:r<0.3q}.
This finishes the proof of Theorem \ref{thm: q^2+q+1+r}.\qed

\section{Concluding remarks}
This paper mainly concerns upper bounds of the extremal number $\ex(n,C_4)$.
We prove that the upper bound estimations in Theorems~\ref{thm: q^2+q+1-r} and \ref{thm: q^2+q+1+r} hold for a positive proportion of integers in $I_q^-$ and $I_{q-1}^+$, respectively.
We actually speculate that the upper bound of Theorem~\ref{thm: q^2+q+1-r} and the restricted form of Theorem~\ref{thm: q^2+q+1+r} for $r\leq 0.3q$ (i.e., the line 2 and the line 3 in Figure~1)
hold for all but $o(q)$ integers in $I_q^-\cup I_{q-1}^+$.
This would be one step closer to the precise value of $\ex(n,C_4)$ and in particular, would imply that the real $\epsilon$ in Theorem~\ref{thm:main} can be chosen to be any positive real less than $1/4$
for a positive density of integers $n$.

The proof of Theorem~\ref{thm: q^2+q+1-r} indicates that it would be very helpful for estimating the extremal number if the high minimum degree condition can be provided.
However, we tend to believe that this approach does not work in general (we plan to explore this and related topics in a coming paper).

\bigskip

\bigskip

{\noindent \bf Acknowledgements}. The authors would like to thank Zolt\'an F\"uredi for helpful discussions and for bringing the reference \cite{DHS} to our attention.
The authors are also grateful to Boris Bukh for many kind suggestions.

\bibliographystyle{unsrt}

\section*{Appendices}
\subsection*{A. Justification of the inequality \eqref{equ: 3r+9}}
Here we present a detailed proof for the inequality \eqref{equ: 3r+9}.
Note that $n=q^2+q+2-r$, $k=9+3r$, $m_0=\frac 12 q(q+1)^2- rq$, $k(q+2)\le s\le q^2+2q$ and $0\le r\le 0.03q$.
Also we have
\begin{align*}
F(q,r,s )&= (n-s+k-1)(n-s+k-2)( n-k )\\
&- (2m_0-s+(k-1)0.2q-nk+ k) (2m_0-s+(k-1)0.2q-nk-n+2k).
\end{align*}
It suffices to show that under the conditions, namely, $q$ is sufficiently large, $k(q+2)\le s\le q^2+2q$ and $0\le r\le 0.03q$,
the maximum value $F_{\max}$ of $F(q,r,s)$ is less than $0$.
For fixed $q$ and $r$, $F$ is a quadratic function with variable $s$.
The coefficients of $s^2$ and $s$ are $q^2 + q - 4 r - 8$ and $-2 q^4 - 2 q^3 + q^2 (-2 r - 22) + q (-4.8 r - 18.8) + 22 r^2 +120 r + 122$, respectively.
So the axis of symmetry is $$\frac{-2 q^4 - 2 q^3-2q^2  r+o(q^3)}{-2(q^2 + q - 4 r - 8) }=q^2+o(q^2).$$ Since $k(q+2)\le s \le q^2+2q $, we have
\begin{align*}
 F_{\max}&= F(q,r,k(q+2) )=
q^4 (-0.2 r-0.2)+q^3 (6.6 r^2+50.6 r+104)
\\&+q^2 (-18 r^3-180.76 r^2-633.32 r-729.56)+
q (-51.6 r^3-487r^2-1567.2 r-1630.8)\\
&-9 r^4-76 r^3-243 r^2-656 r-1044=q^4 (-0.2 r-0.2)+  6.6q^3 r^2  -18q^2r^3+o(q^4(r+1))\\
&=r(-0.2q^4+6.6q^3 r-18q^2r^2)-0.2q^4+o(q^4(r+1)).
\end{align*}
In the range $0\le r\le 0.03q$, we have $-0.2q^4+6.6q^3 r-18q^2r^2\le -0.01q^4$, implying that $F_{\max}<0$. \qed

\subsection*{B. Justification of the inequality \eqref{equ: 0.5q}}
Recall that here we have $n=q^2+q+1+r$, $k=0.55q$ and $m_0=\frac 12 n(q+1)$.
Moreover, $1\le r\le 0.3q$ and $k(q+2)\le s\le q^2+2q$.
The function $G(q,r,s)$ can be rewrote as the following
\begin{align*}
G(q,r,s)&=(n-s+k-1)(n-s+k-2)( n-k )\\
&- (2m_0-s+(k-1)0.7q-nk+ k) (2m_0-s+(k-1)0.7q-nk-n+2k).
\end{align*}

Similar to Appendix A, it suffices to show that the maximum value $G_{\max}$ of $G(q,r,s)$ is less than zero,
subject to the restrictions that $1\le r\le 0.3q$ and $k(q+2)\le s\le q^2+2q$.

For fixed $q$ and $r$, $G$ is a quadratic function with the variable $s$.
The coefficients of $s^2$ and $s$ are $q^2 + 0.45q + r$ and $-2 q^4 - 3.1 q^3 -(4r-0.275) q^2   -(3.1r+0.5)  q  - 2 r^2 + 2$, respectively.
So the axis of symmetry is $$\frac{ 2 q^4 + 3.1 q^3  + 4 q^2 r +o(q^3)}{ 2(q^2 + 0.45 q + r) }=q^2+o(q^2).$$
Since $k(q+2)\le s \le q^2+2q $, we have
\begin{align*}
G_{\max}&= G(q,r,k(q+2))=-0.210375 q^5+(0.6975 r-0.206475)q^4 +(0.5535 r-0.342125)q^3
\\&+(1.6975 r^2-0.205   r-0.685)q^2 +(0.9 r^2-0.2  r-0.2)q +r^3-r \le  -0.21  q^5+  0.698 rq^4+o(q^5).
 \end{align*}
As $1\le r\le 0.3q$, we get $ -0.21  q^5+  0.698 rq^4\le-0.0006q^5$.
Therefore, we have $G_{\max}<0$ and thus complete the proof.
\qed
\end{document}